\documentclass[11pt, a4paper]{article}

\usepackage{amsmath}
\usepackage{amssymb}
\usepackage{amsthm}
\usepackage{booktabs}
\usepackage{hyperref}

\theoremstyle{plain}
\newtheorem{theorem}{Theorem}

\newtheorem{lemma}[theorem]{Lemma}
\newtheorem{prop}[theorem]{Proposition}

\newtheorem{fact}[theorem]{Fact}

\theoremstyle{definition}
\newtheorem{conj}{Conjecture}

\newtheorem{question}[conj]{Question}

\title{Prime Nullspace Matrices and Even Dominating Sets}
\author{William F. Klostermeyer\\
   \small School of Computing\\
   \small University of North Florida, Jacksonville, FL 32224-2669\\
   \small E-mail: \href{mailto:wkloster@unf.edu}{wkloster@unf.edu}}
\date{}

\begin{document}

\maketitle

\begin{abstract}
A non-zero $m \times n$ $(0, 1)$-matrix $A$ is called a \emph{nullspace matrix} if each entry $(i, j)$ of $A$ has an even number of $1$'s in the set of entries consisting of $(i, j)$ and its rectilinear neighbors. These matrices correspond to even dominating sets of $m \times n$ grid graphs. We study these matrices and consider those that cannot be constructed from smaller nullspace matrices.
\end{abstract}

\section{Introduction}

A non-zero $m \times n$ $(0, 1)$-matrix $A$ is called a \emph{nullspace matrix} if each entry $(i, j)$ of $A$ has an even number of $1$'s in the set of entries consisting of $(i, j)$ and its rectilinear neighbors. It is called a nullspace matrix since the existence of an $m \times n$ nullspace matrix implies the closed neighborhood matrix of the $m \times n$ grid graph is singular over $\mathrm{GF}(2)$. By \textit{closed neighborhood matrix}, we mean the adjacency matrix of the graph with $1$'s down the diagonal. If this matrix is singular, then the $m \times n$ grid graph contains a non-empty set of vertices $D$ such that each vertex in the grid graph has an even number of elements in its closed neighborhood in $D$, a so-called even dominating set; see for example \cite{gold3}, which contains numerous results on the properties of nullspace matrices and their relationship to even and odd dominating sets in grid graphs.

See Table~\ref{test2} for an example of a nullspace matrix. (All of the arithmetic in this paper is done modulo $2$).

\begin{table}[htbp]
\centering
\caption{A $4 \times 4$ Nullspace Matrix}
\label{test2}
\vspace{2mm}
\begin{tabular}{cccc}
\toprule
1 & 0 & 0 & 0 \\
1 & 1 & 0 & 0 \\
1 & 0 & 1 & 0 \\
0 & 1 & 1 & 1 \\
\bottomrule
\end{tabular}
\end{table}

The row entries in a nullspace matrix can be generated by the Fibonacci polynomials over $\mathrm{GF}(2)$, see \cite{gold3, klost1}. Alternatively, we can compute the entries in row $r_i$ in a nullspace matrix, such as in Table~\ref{test2}, as follows:

\begin{equation}
r_i[j] = r_{i-1}[j] + r_{i-1}[j-1] + r_{i-1}[j+1] + r_{i-2}[j] \pmod 2
\end{equation}

\noindent where undefined entries are taken to be zero. Using this computation, if row $k$ is all zeroes, then rows one through $k-1$ form a nullspace matrix \cite{gold3}.

In terms of the Fibonacci polynomials, if $r_i$ is the $i^{\text{th}}$ row of a nullspace matrix, then

\begin{equation}
r_i = Br_{i-1} + r_{i-2} \quad \text{for } i \ge 2, \quad r_0 = 0, \quad r_1 = w
\end{equation}

\noindent where $B = [b_{ij}]$ is the $n \times n$ tridiagonal $(0, 1)$-matrix with $b_{ij} = 1$ if and only if $|i - j| \le 1$ (and the $r_i$'s in (1) are written as column vectors). If $r_{m+1} = 0$ for some positive integer $m$, then $r_1, r_2, \ldots, r_m$ are the rows of an $m \times n$ nullspace matrix.

Goldwasser and Klostermeyer showed that it follows from the definitions that $r_i = f_i(B)w$ for $i = 0, 1, 2, \ldots$ where $f_i$ is the $i^{\text{th}}$ Fibonacci polynomial over $\mathrm{GF}(2)$ \cite{gold3}:

\begin{equation}
f_i = xf_{i-1} + f_{i-2} \quad \text{for } i \ge 2, \quad f_0 = 0, \quad f_1 = 1
\end{equation}

The following characterizes when nullspace matrices exist.

\begin{theorem}[\cite{gold3}]
There exists an $m \times n$ nullspace matrix if and only if the greatest common divisor of $f_{n+1}(x+1)$ and $f_{m+1}(x)$ is not equal to $1$.
\end{theorem}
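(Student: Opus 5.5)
The plan is to translate the existence of a nullspace matrix into a statement about the matrix $f_{m+1}(B)$ and then use the structure of $B$. By the recurrence (2) and the identity $r_i = f_i(B)w$, an $m\times n$ nullspace matrix exists if and only if some nonzero $w\in \mathrm{GF}(2)^n$ satisfies $f_{m+1}(B)w=0$. Such a $w$ cannot have $r_1,\dots,r_m$ all zero, since $r_1=w\neq 0$. Conversely, any nullspace matrix has a nonzero first row $w$, and its rows are determined by (1) from $w$. So existence is equivalent to $f_{m+1}(B)$ being singular over $\mathrm{GF}(2)$.

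The next step is to identify the characteristic (equivalently, minimal) polynomial of $B$. Write $B = I + T$, where $T$ is the $n\times n$ path adjacency matrix. Expanding $\det(xI+T)$ along the last row gives $p_n = x p_{n-1} + p_{n-2}$ with $p_0=1$ and $p_1=x$. This is exactly the Fibonacci recurrence shifted by one index, so $\det(xI+T)=f_{n+1}(x)$ over $\mathrm{GF}(2)$. Hence the characteristic polynomial of $B$ is $\det(xI+I+T)=f_{n+1}(x+1)$; signs do not matter mod 2. Moreover $T$, and hence $B$, is nonderogatory: $e_1, Te_1, \dots, T^{n-1}e_1$ are linearly independent, because $T^k e_1$ has leading entry in position $k+1$. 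Therefore the minimal polynomial of $B$ equals its characteristic polynomial $g(x)=f_{n+1}(x+1)$.

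The key algebraic fact I would then prove is this: for a matrix $B$ with characteristic polynomial $g$ and any polynomial $h$, $h(B)$ is singular if and only if $\gcd(h,g)\neq 1$. If $\gcd(h,g)=1$, choose $a,b$ with $ah+bg=1$. Since $g(B)=0$ by Cayley--Hamilton, we get $a(B)h(B)=I$, so $h(B)$ is invertible. Conversely, suppose $d=\gcd(h,g)$ is nonconstant, and let $q$ be an irreducible factor of $d$. Then $q$ divides the characteristic polynomial of $B$, so $B$ has an eigenvalue $\lambda$ in the splitting field with $q(\lambda)=0$. For an eigenvector $v$, $h(B)v=h(\lambda)v=0$, so $h(B)$ is singular over the extension field, and hence over $\mathrm{GF}(2)$, since the rank of a matrix does not depend on the field containing its entries. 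Applying this with $h=f_{m+1}$ and $g=f_{n+1}(x+1)$ gives the theorem. The nonderogatory observation is not actually needed; Cayley--Hamilton suffices.

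The main obstacle is the bookkeeping in the determinant computation: the indices must align so that the characteristic polynomial is exactly $f_{n+1}(x+1)$ and not $f_n$ or $f_{n+2}$. I would check the base cases $n=1$ (giving $x+1=f_2(x+1)$) and $n=2$ (giving $(x+1)^2+1=x^2=f_3(x+1)$) before trusting the recurrence.
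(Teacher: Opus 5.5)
Your proof is correct and complete. The paper gives no proof of its own, citing \cite{gold3}, and your argument is the standard one that the paper's equations (2)--(3) set up: reduce existence to singularity of $f_{m+1}(B)$, identify $f_{n+1}(x+1)$ as the characteristic polynomial of $B$ through the tridiagonal determinant recurrence, and finish with the B\'ezout/Cayley--Hamilton criterion together with the eigenvector argument over an extension field.
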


As an example, we can use the theorem as follows.

\begin{fact}
There exists a $(2^n-3) \times (2^n-1)$ nullspace matrix for all odd $n \ge 3$.
\end{fact}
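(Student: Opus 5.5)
The plan is to apply the theorem of \cite{gold3} with $m = 2^n - 3$ and with $2^n-1$ playing the role of the number of columns. It then suffices to show that
\[
\gcd\bigl(f_{2^n}(x+1),\, f_{2^n-2}(x)\bigr) \neq 1 .
\]
I will compute the first polynomial exactly. Then I will check that its only irreducible factor, $x+1$, divides the second.

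\emph{Step 1: $f_{2^k} = x^{2^k-1}$ over $\mathrm{GF}(2)$.} I first prove the doubling identity $f_{2i} = f_i\,(f_{i+1} + f_{i-1})$. One route is the standard addition formula $f_{a+b} = f_a f_{b+1} + f_{a-1} f_b$, proved by induction on $b$ from the recurrence for $f_i$. Another is the matrix form $\begin{pmatrix} x & 1\\ 1 & 0\end{pmatrix}^i = \begin{pmatrix} f_{i+1} & f_i\\ f_i & f_{i-1}\end{pmatrix}$, squaring both sides. The recurrence gives $f_{i+1} + f_{i-1} = x f_i$, which is valid mod $2$ since signs do not matter. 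Hence $f_{2i} = x f_i^2$. From $f_1 = 1$, induction on $k$ gives $f_{2^k} = x\,(x^{2^{k-1}-1})^2 = x^{2^k-1}$. Therefore
\[
f_{2^n}(x+1) = (x+1)^{2^n-1},
\]
so the gcd is nontrivial exactly when $(x+1) \mid f_{2^n-2}(x)$, that is, when $f_{2^n-2}(1) = 0$ in $\mathrm{GF}(2)$.

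\emph{Step 2: evaluate at $x = 1$.} Setting $x=1$ in the recurrence gives the Fibonacci numbers mod $2$: $0,1,1,0,1,1,\dots$, with period $3$. So $f_i(1) = 0$ if and only if $3 \mid i$.

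\emph{Step 3: conclude.} Since $2 \equiv -1 \pmod 3$, for odd $n$ we have $2^n \equiv 2 \pmod 3$, and hence $3 \mid 2^n - 2$. Thus $x+1$ divides both polynomials, and the theorem of \cite{gold3} yields the $(2^n-3)\times(2^n-1)$ nullspace matrix. The hypothesis $n \ge 3$ only ensures $m = 2^n - 3 \ge 1$, so the dimensions make sense.

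The only step requiring real work is the doubling identity in Step 1. Everything else is a direct computation.
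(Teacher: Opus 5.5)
Your proof is correct and follows the paper's argument: apply the gcd criterion and exhibit a common linear factor, using that $f_{2^n}$ is a power of $x$ and that $x+1$ divides $f_{2^n-2}$ because $3 \mid 2^n-2$ for odd $n$. The differences are cosmetic. You derive $f_{2^k}=x^{2^k-1}$ and the divisibility by $x+1$ directly, via the doubling identity and evaluation at $x=1$, where the paper cites Corollary 3 and Lemma 4 of Goldwasser--Klostermeyer; and you put the shift $x\mapsto x+1$ on $f_{2^n}$, as the theorem literally reads, so your common factor is $x+1$, while the paper shifts $f_{2^n-2}$ and finds the common factor $x$, which is equivalent by transposing the matrix.
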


\begin{proof}
This is the case if the $\gcd$ of $f_{2^n-2}(x+1)$ and $f_{2^n}(x)$ is not $1$. By Corollary 3 of \cite{gold3}, we immediately have $x \mid f_{2^n}$ for all $n \ge 1$. By Lemma 4 of \cite{gold3}, one can see that $(x+1) \mid f_{2^n-2}$ if $n > 2$ is odd, since $3 \mid (2^n-2)$ when $n$ is odd and since $(x+1) \mid f_3(x)$. Hence the proof.
\end{proof}

One can show by similar methods that $(2^n-3) \times (2^n-1)$ nullspace matrices do not exist when $n > 2$ is even.

We can build large nullspace matrices from smaller ones as shown by the following proposition.

\begin{prop}[\cite{klost1}]
Let $n$ and $k$ be positive integers with $k+1$ a multiple of $n+1$. If there exists an $n \times n$ nullspace matrix, then there also exists a $k \times k$ nullspace matrix.
\end{prop}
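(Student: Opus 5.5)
Write $k+1 = t(n+1)$ for a positive integer $t$. I would give two arguments: an algebraic one through Theorem 1 and a direct construction.

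\emph{Algebraic route.} By Theorem 1, the existence of an $n\times n$ nullspace matrix means $g(x)=\gcd\bigl(f_{n+1}(x+1),\,f_{n+1}(x)\bigr)\neq 1$. The needed fact is the divisibility property of Fibonacci polynomials: $f_a \mid f_b$ whenever $a \mid b$. This follows from the addition formula $f_{a+b}=f_{a}f_{b+1}+f_{a-1}f_{b}$, which holds over any ring and can be proved by induction on $b$ from recurrence (3). Setting $b=(t-1)a$, induction on $t$ gives $f_a \mid f_{ta}$. Applying this with $a=n+1$ shows $f_{n+1}(x)\mid f_{k+1}(x)$. Substituting $x+1$ for $x$ shows $f_{n+1}(x+1)\mid f_{k+1}(x+1)$. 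Hence $g$ divides both $f_{k+1}(x+1)$ and $f_{k+1}(x)$, so their gcd is not $1$. Theorem 1 then gives a $k\times k$ nullspace matrix.

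\emph{Constructive route.} Let $A$ be the $n\times n$ nullspace matrix, and let $A^{h}$ and $A^{v}$ be its left-right and top-bottom reflections. Both reflections are again nullspace matrices. Build the $k\times k$ matrix as a $t\times t$ array of $n\times n$ blocks, separated by zero rows and zero columns. The block in position $(p,q)$ is $A$ reflected horizontally when $q$ is even and vertically when $p$ is even.

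Each entry inside a block has its whole closed neighborhood inside that block and the adjacent zero lines, so its parity condition is inherited from $A$. For an entry on a zero separator line, the closed neighborhood meets only the two boundary rows (or columns) facing it. These are mirror images of each other, so the two contributions cancel mod 2. At an intersection of separators, the entry's closed neighborhood contains only zeros.

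The main obstacle is the separator check, and in the algebraic route the corresponding step is the divisibility lemma. I expect both to be routine. The algebraic argument is the cleanest, since it uses only Theorem 1 and the standard identity for $f_{a+b}$.
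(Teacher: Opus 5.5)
Both of your arguments are correct. Your constructive route is the paper's proof: the paper zeroes rows and columns $n+1, 2(n+1), \ldots, (t-1)(n+1)$, places $A$ in one of the $t\times t$ blocks, and fills the rest by reflecting across the zero lines. Your separator check supplies the verification the paper leaves to the reader.

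Your algebraic route is genuinely different. It reduces the statement to Theorem 1 plus the divisibility $f_a\mid f_{ta}$, and your addition-formula induction for that divisibility is fine. What it buys is brevity and a conceptual reason why the hypothesis is ``$n+1$ divides $k+1$''. This mirrors the divisibility of Fibonacci numbers that motivates the paper's ``prime'' terminology. Like the construction, it also extends at once to rectangular sizes.

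What it costs is that it leans on the nontrivial characterization in Theorem 1 and yields only existence. The reflection construction uses nothing beyond the definition. It also explicitly produces a \emph{composite} $k\times k$ matrix with zero lines at the multiples of $n+1$. That is the structure the paper exploits afterwards: the lemma on regularly spaced zero rows, the $62\times 62$ matrix assembled from a $6\times 8$ block, and the definitions of nullspace-prime and super nullspace-prime. So, contrary to your closing remark, the constructive argument is the one better suited to this paper, and the algebraic one is best kept as a remark.
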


That is, if $k+1 = q(n+1)$ where $q$ is a positive integer, and if $A$ is an $n \times n$ nullspace matrix, then a $k \times k$ nullspace matrix can be constructed by letting row and column numbers $n+1, 2(n+1), \ldots, (q-1)(n+1)$ have all entries equal to zero, creating a $q \times q$ array of $n \times n$ squares, putting $A$ in one of the $n \times n$ squares and filling in the rest of them by ``reflecting'' across the lines of zeroes. See Table~\ref{null9} for an example.

\begin{table}[htbp]
\centering
\caption{A $9 \times 9$ Nullspace Matrix}
\label{null9}
\vspace{2mm}
\begin{tabular}{ccccccccc}
\toprule
1 & 0 & 0 & 0 & 0 & 0 & 0 & 0 & 1 \\
1 & 1 & 0 & 0 & 0 & 0 & 0 & 1 & 1 \\
1 & 0 & 1 & 0 & 0 & 0 & 1 & 0 & 1 \\
0 & 1 & 1 & 1 & 0 & 1 & 1 & 1 & 0 \\
0 & 0 & 0 & 0 & 0 & 0 & 0 & 0 & 0 \\
0 & 1 & 1 & 1 & 0 & 1 & 1 & 1 & 0 \\
1 & 0 & 1 & 0 & 0 & 0 & 1 & 0 & 1 \\
1 & 1 & 0 & 0 & 0 & 0 & 0 & 1 & 1 \\
1 & 0 & 0 & 0 & 0 & 0 & 0 & 0 & 1 \\
\bottomrule
\end{tabular}
\end{table}

Of course, letting $x=1$ in the Fibonacci polynomials generates the Fibonacci numbers. We are inspired by the famous conjecture that there are infinitely many primes that are also Fibonacci numbers. This leads us to explore the concept of ``prime'' nullspace matrices in the next section.

\section{Nullspace-Primes}

A nullspace matrix that has at least one row or column of zeroes is a \textit{composite nullspace matrix}, otherwise it is a \textit{prime nullspace matrix}. An integer $n$ is \textit{nullspace-prime} if there exists an $(n-1) \times (n-1)$ nullspace matrix, but for no proper divisor $m$ of $n$ does there exist an $(m-1) \times (m-1)$ nullspace matrix. From Table~\ref{null9}, we see that $10$ is not nullspace-prime. In \cite{klost1}, it is reported that the first few nullspace-primes are 5, 6, 17, 31, 33, 63, 127, 129, 171, 257, 511, 683. It is easy to see that there exists an $n \times n$ nullspace matrix if and only if $n$ is one less than a multiple of a nullspace-prime.

It was conjectured in \cite{klost1} that there are infinitely many nullspace primes and this was proved in \cite{hunz}. We present a slightly different proof. A lemma is needed first.

\begin{lemma}\label{rows}
If $n \times n$ matrix $M$ is composite, then every $i^{\text{th}}$ row must be all-zeroes, for some integer $i > 0$.
\end{lemma}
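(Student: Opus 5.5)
Interpret the statement this way: if $M$ is an $n\times n$ nullspace matrix with a zero row, then there is an $i>0$ such that rows $i, 2i, 3i,\dots$ (up to $n$) are all zero. Moreover $i+1$ divides $n+1$, and the blocks between the zero rows are reflections of one another, as in the construction after Proposition~\cite{klost1}.

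The proof rests on the row recurrence (2), $r_j = Br_{j-1}+r_{j-2}$. This recurrence runs in both directions, since $r_{j-2}=r_j+Br_{j-1}$. Hence any two consecutive rows determine every other row, forwards and backwards. A column of zeroes can be handled the same way by transposing, because the neighborhood condition is symmetric in rows and columns. So I treat only the case of a zero row.

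The steps, in order, are these.

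\textbf{Step 1.} Let $i$ be the smallest index with $r_i=0$, where $1\le i\le n$. Note $i\neq 1$ would follow if $w\neq 0$, but $r_1=0$ would force $M=0$, so in fact $i\ge 2$ and rows $r_1,\dots,r_{i-1}$ form an $(i-1)\times n$ nullspace matrix.

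\textbf{Step 2.} Prove reflection across a zero row. If $r_i=0$, then $r_{i+k}=r_{i-k}$ for all $k$. The base cases are $r_{i}=r_{i}$ and $r_{i+1}=Br_i+r_{i-1}=r_{i-1}$. For the induction step, $r_{i+k+1}=Br_{i+k}+r_{i+k-1}=Br_{i-k}+r_{i-k+1}=r_{i-k-1}$, using the backward form of the recurrence.

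\textbf{Step 3.} Apply Step~2 with $k=i$. This gives $r_{2i}=r_0=0$. Induction then yields $r_{qi}=0$ for every $q$: reflecting across the zero row $r_{qi}$ sends $r_{(q-1)i}=0$ to $r_{(q+1)i}=0$. This holds as long as $qi\le n+1$, and also beyond that point if the rows are extended by the recurrence.

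\textbf{Step 4.} Obtain the divisibility. Since $M$ is a nullspace matrix, $r_{n+1}=0$. Minimality of $i$ together with the reflections should force $i$ to divide $n+1$. Write $n+1=qi+s$ with $0<s<i$. Reflecting across the zero row $r_{qi}$ gives $r_{qi-s}=r_{qi+s}=0$. Since $qi-s\equiv i-s \pmod i$, and the zero rows are invariant under these reflections, we get $r_{i-s}=0$. But $0<i-s<i$, contradicting the minimality of $i$. Hence $s=0$.

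The main obstacle is Step~4, namely pinning down the correct indexing. The paper's zero rows lie at positions $n'+1, 2(n'+1),\dots$, so the period is the index of the first zero row. I must make sure the reflections of Step~2 are applied to indices that stay within $[0,n+1]$, or else extend the sequence by the recurrence, which is legitimate because the recurrence is defined for all indices. The rest is routine.
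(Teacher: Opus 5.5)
Your proposal is correct and follows the paper's approach: you take the first zero row, use the two-way recurrence to reflect across it, and propagate zeros to every multiple of $i$. The paper compresses your Steps 2--3 into ``and so on'' and does not prove the divisibility of Step 4 at all. One small slip in Step 4: reflections across multiples of $i$ preserve residues modulo $2i$ up to sign, not modulo $i$. So from $r_{qi-s}=0$ you get $r_{i-s}=0$ only when $q$ is odd, and $r_s=0$ when $q$ is even; either one contradicts the minimality of $i$, so $i \mid n+1$ still holds.
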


\begin{proof}
Assume without loss of generality that $M$ has a row of zeroes (else rotate $M$ ninety degrees). Let $k > 0$ be the smallest integer such that row $k$ of $M$ is all-zeroes. By equation (1), rows $k-1$ and $k+1$ will be equal. Since equation (1) can be applied in both top-down and bottom-up directions, it follows that row $k-2$ will equal row $k+2$ and so on. The lemma follows.
\end{proof}

\begin{theorem}
There are an infinite number of nullspace-primes.
\end{theorem}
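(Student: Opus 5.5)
We argue by contradiction: suppose there are only finitely many nullspace-primes $p_1,\dots,p_t$. By the remark preceding Lemma~\ref{rows}, an $n\times n$ nullspace matrix exists if and only if $n+1$ is a multiple of some nullspace-prime. So under our assumption, every $N$ for which an $(N-1)\times(N-1)$ nullspace matrix exists is divisible by some $p_j$. The plan is to produce infinitely many $N$ with an $(N-1)\times(N-1)$ nullspace matrix, and then use Lemma~\ref{rows} to show that these $N$ cannot all be multiples of the $p_j$.

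\textbf{Step 1: a convenient infinite family.} Take $N=2^k+1$, that is, $2^k\times 2^k$ matrices. By the Theorem of \cite{gold3}, such a matrix exists exactly when $\gcd(f_{2^k+1}(x+1),f_{2^k+1}(x))\neq 1$. Over $\mathrm{GF}(2)$ we have the Frobenius-type identity $f_{2^k+1}=f_{2^{k-1}+1}^2+f_{2^{k-1}}^2$, which comes from the doubling formulas for Fibonacci polynomials. Combined with $f_{2^k}=x^{2^k-1}$, this should give the explicit form $f_{2^k+1}(x)=\sum_{j} x^{2^{j+1}-2}$ or something similar. From this I would like to read off that $x$ and $x+1$ both fail to divide it, while some common irreducible factor exists for infinitely many $k$. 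This step is uncertain.

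\textbf{Alternative family.} A cleaner route may be $N=2^k-1$, since $f_{2^k-1}$ has a closed form and the list of nullspace-primes $31,63,127,511$ suggests that $(2^k-2)\times(2^k-2)$ matrices exist for all $k\ge 5$. I would first try to show $\gcd(f_{2^k-1}(x),f_{2^k-1}(x+1))\neq1$ for infinitely many $k$, exploiting the symmetry $x\mapsto x+1$ of suitable factors. If that fails, I would fall back to the fact already proved earlier in the paper, for $(2^n-3)\times(2^n-1)$ matrices, and adapt it to square matrices.

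\textbf{Step 2: contradiction via Lemma~\ref{rows}.} Choose such an $N$ with $N$ coprime to every $p_j$. With $N=2^k\pm1$ this is arranged by requiring $k$ to be a multiple of $\prod_j \operatorname{ord}_{p_j}(2)$ when the $p_j$ are odd; the even prime $6$ causes no trouble since $N$ is odd. Any $(N-1)\times(N-1)$ nullspace matrix $M$ of this $N$ is then prime. Indeed, if $M$ were composite, Lemma~\ref{rows} would make every $i$-th row zero with $i\mid N$ and $1<i<N$, so its top $(i-1)\times(N-1)$ block would be a nullspace matrix. Applying the same reflection argument to the columns would give an $(i-1)\times(i-1)$ matrix, making $i$ a multiple of a nullspace-prime, which is impossible since $N$ is coprime to all $p_j$. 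Hence the smallest divisor $d$ of $N$ admitting a $(d-1)\times(d-1)$ nullspace matrix is nullspace-prime. Since $d$ is coprime to every $p_j$, it is a new nullspace-prime, a contradiction.

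The main obstacle is Step 1: proving that nontrivial gcds occur for infinitely many $N$ inside an arithmetic family that can be chosen coprime to any finite set. Handling the column-reflection subtlety in Step 2 is secondary.
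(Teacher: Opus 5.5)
\textbf{The gap is Step~1.} Everything rests on it, and you leave it unproved: you need infinitely many $N$, in a family you can steer away from any finite set of divisors, for which an $(N-1)\times(N-1)$ nullspace matrix exists. This does not follow formally from a closed form. It fails at $k=3$ in both of your families, since there is no $8\times 8$ and no $6\times 6$ nullspace matrix. Your guessed expansion is also wrong: the doubling identity gives $f_{2^k+1}(x)=x^{2^k}+\sum_{j=1}^{k-1}x^{2^k-2^j}+1$.

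Your fallback cannot supply Step~1 either. For a square size, the linear-factor reasoning behind the $(2^n-3)\times(2^n-1)$ fact only finds a common factor $x$ or $x+1$ of $f_N(x)$ and $f_N(x+1)$. That happens exactly when $6\mid N$. Tiling the rectangular matrix into a square one likewise forces $N$ to be a multiple of $2^n(2^{n-1}-1)$, hence of $6$. So the resulting $N$ are always multiples of the known nullspace-prime $6$ and can never give the contradiction.

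The paper does not derive Step~1 from the $\gcd$ criterion. It imports it from Goldwasser and Klostermeyer: $2^k\times 2^k$ and $(2^k-2)\times(2^k-2)$ nullspace matrices exist for every $k>3$. It then takes $N=2^p-1$ with $p$ a prime exceeding the largest nullspace-prime $q$. Every prime factor of $2^p-1$ is $\equiv 1 \pmod p$, so no divisor $d>1$ of $N$ is at most $q$, yet $N$ must be a multiple of some nullspace-prime.

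\textbf{Step~2 also needs repair.} Your coprimality recipe works only for the plus sign. If $\operatorname{ord}_{p_j}(2)\mid k$, then $2^k+1\equiv 2 \pmod{p_j}$, so no odd $p_j$ divides $N$. But then $2^k-1\equiv 0\pmod{p_j}$, which is exactly what you must avoid. For $N=2^k-1$ you need $k$ prime and large, as in the paper.

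Your argument that every $(N-1)\times(N-1)$ nullspace matrix is prime is also invalid. Zero rows with period $i$ do not force zero columns with period $i$: the paper's composite $62\times 62$ matrix has row period $7$ and column period $9$. Nor does an $(i-1)\times(N-1)$ nullspace matrix produce an $(i-1)\times(i-1)$ one. That detour, and Lemma~\ref{rows}, are unnecessary anyway. The minimal-divisor observation you open with already gives the contradiction once $N$ avoids every $p_j$.

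So, granted the cited existence of $2^k\times 2^k$ nullspace matrices for $k>3$, your route gives a complete proof. Take $N=2^k+1$ with $k>3$ a multiple of $\prod_j\operatorname{ord}_{p_j}(2)$ over the odd $p_j$; the even ones cannot divide the odd $N$. This is a legitimate alternative to the paper's Mersenne-number argument. Without that existence result, what you have is only the reduction, not a proof.
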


\begin{proof}
We know from \cite{gold3} that if $n=2^k$, where $k > 3$, or $n=2^k-2$ where $k > 3$, then there exists an $n \times n$ nullspace matrix.

Suppose there were only finitely many nullspace-primes. Then there is a largest nullspace prime $q$. Then we can choose a prime number $p$ greater than $q$. Since all prime divisors of $2^p-1$ are congruent with $1 \pmod p$, all non-trivial divisors of $2^p-1$ are greater than $q$. However, by Lemma~\ref{rows}, one of the divisors of $2^p-1$ must be nullspace-prime, since $2^p-1$ is not nullspace-prime. This is a contradiction.
\end{proof}

Define $n$ to be \textit{super nullspace-prime} if there exists no $(n-1) \times (n-1)$ composite nullspace matrix and there exists an $(n-1) \times (n-1)$ nullspace matrix. As stated above, $63$ is a nullspace-prime, so there is no way to ``piece together'' square nullspace matrices to get a $62 \times 62$ nullspace matrix. But there exists a $6 \times 8$ nullspace matrix. A $9 \times 7$ array of this nullspace matrix and its reflections, with rows and columns of zeroes in between, can be used to construct a composite $62 \times 62$ nullspace matrix. Hence $63$ is nullspace-prime, but not super nullspace-prime.

It is easy to see that the rows of zeroes in an $n \times n$ nullspace matrix occur in regular intervals and likewise the columns (though these may be two different intervals, as in the $62 \times 62$ case). To see this, note that equation (1) is symmetric about rows (columns) of zeroes: if row (column) $n$ is all-zeroes, then rows (columns) $n-1$ and $n+1$ will be equal, for $n \ge 1$.

\begin{conj}\label{nullsp}
There are an infinite number of super nullspace-primes.
\end{conj}

Because there exists a $(2^n-2) \times (2^n-2)$ nullspace matrix for all $n > 3$ \cite{gold3}, Conjecture~\ref{nullsp} is true if there are infinitely many Mersenne primes, the existence of which is one of the most famous conjectures of number theory. Though $2^n-1$ is prime only if $n$ is prime, it is possible for $2^n-1$ to be nullspace prime for composite $n$ such as $n=9$.

By the same token, because there exists a $2^n \times 2^n$ nullspace matrix for all $n > 3$, see \cite{gold3}, Conjecture~\ref{nullsp} is true if there are infinitely many Fermat primes (primes of the form $2^n+1$, of which only five are currently known: 3, 5, 17, 257, and 65,537). Of course, we have nullspace primes of the form $2^n+1$ such as 33 and 129 that are not Fermat primes. However, the next smallest nullspace-prime of the form $2^n+1$ is 65,537. For $2^n+1$ to be prime, it is well-known that $n=2^k$, for some $k \ge 0$.

\begin{fact}
Let $q=2^n+1$ be such that $q \ge 5$ and $n \neq 2^k$, for any $k \ge 1$. If $3$ does not divide $q$, then $q$ is not nullspace-prime.
\end{fact}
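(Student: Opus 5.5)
The plan is to exhibit a proper divisor $d$ of $q$ for which a $(d-1)\times(d-1)$ nullspace matrix exists. Then the second clause in the definition of nullspace-prime fails, so $q$ is not nullspace-prime. It does not matter whether a $(q-1)\times(q-1)$ nullspace matrix exists.

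First, I use the hypothesis $3 \nmid q$ to show that $n$ is even. Since $2 \equiv -1 \pmod 3$, we have $2^n+1 \equiv (-1)^n + 1 \pmod 3$. This is $0$ when $n$ is odd, so $3\nmid q$ forces $n$ to be even. The hypothesis $q \ge 5$ rules out $n=0$ and $n=1$, so $n \ge 2$.

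Next, I write $n = 2^k m$ with $m$ odd and $k \ge 1$. Since $n$ is not a power of $2$, we get $m \ge 3$. Put $a = 2^{2^k}$. Because $m$ is odd, $x+1$ divides $x^m+1$ as polynomials, so $d := a+1 = 2^{2^k}+1$ divides $a^m + 1 = 2^n+1 = q$. Also $m\ge 3$ gives $1 < d < q$, so $d$ is a proper divisor of $q$.

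Finally, I need a $(d-1)\times(d-1) = 2^{2^k}\times 2^{2^k}$ nullspace matrix. There are two cases.
\begin{itemize}
\item If $k=1$, then $d-1 = 4$, and Table~\ref{test2} is a $4\times 4$ nullspace matrix.
\item If $k \ge 2$, then $2^k \ge 4 > 3$, and the result quoted from \cite{gold3} gives a $2^{j}\times 2^{j}$ nullspace matrix for every $j>3$; take $j = 2^k$.
\end{itemize}
In both cases such a matrix exists, so $q$ is not nullspace-prime.

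The only delicate point is the small case $k=1$, where \cite{gold3} does not apply and the explicit $4\times4$ example is needed. The rest of the argument is elementary number theory.
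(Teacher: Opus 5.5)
Your proof is correct and follows the paper's argument: you exhibit a factor $2^a+1$ of $q$ (taking $a=2^k$, the $2$-part of $n$) and use $3\nmid q$ to guarantee that a $2^a\times 2^a$ nullspace matrix exists. You make explicit what the paper's two-line proof leaves implicit: $3\nmid q$ forces $a$ to be even, which rules out $a=1$ and $a=3$, and the case $a=2$ is handled by Table~\ref{test2} rather than by the result of \cite{gold3}, which needs $a>3$.
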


\begin{proof}
It is well-known that $q$ has a factor of the form $2^a + 1$. Since $3$ does not divide $q$, there is a nullspace matrix of size $2^a \times 2^a$. Hence the result.
\end{proof}

Though 513, 2049, 8193 are each divisible by 3, they are not nullspace-prime due to their other factors (e.g., 171 divides 513 and there exists a $170 \times 170$ nullspace matrix). On the other hand, 9 divides 32,679, but it is nullspace-prime (since there is no $3630 \times 3630$ nullspace matrix). We have not done enough computation to determine if 32,679 is super nullspace-prime or not.

Conjecture~\ref{nullsp} is also true if there exist infinitely many primes $p$ for which there exist $(p-1) \times (p-1)$ nullspace matrices. In fact, this condition can be weakened by saying there exist infinitely many primes $p$ such that there exists a $p^{k-1} \times p^{k-1}$ nullspace matrix for some $k > 0$.

Let $f_n$ be the $n^{\text{th}}$ Fibonacci polynomial (again, all arithmetic is over $\mathrm{GF}(2)$). In terms of the polynomials, the following is from \cite{gold3}.

\begin{theorem}
Let $n$ be an even positive integer and let $d_{n+1}(x)$ have positive degree and be the greatest common divisor of $f_{n+1}(x)$ and $f_{n+1}(x+1)$. Then every $n \times n$ nullspace matrix is prime (i.e., has no row/column of zeroes) if and only if every irreducible factor of $d_{n+1}(x)$ is a factor of $f_{n+1}(x)$ and is a factor of no smaller Fibonacci polynomial.
\end{theorem}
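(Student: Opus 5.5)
The plan is to translate "has a zero row or column" into polynomial language through the vector $w$, and then use the structure of the kernel of $f_{n+1}(B)$. By (2) and the Goldwasser--Klostermeyer identity, an $n \times n$ nullspace matrix is determined by its first row $w \neq 0$. Its rows are $r_i = f_i(B)w$, and the nullspace condition is $f_{n+1}(B)w = 0$. Row $k$ (with $1 \le k \le n$) is zero exactly when $f_k(B)w = 0$. For columns I would transpose: an $n\times n$ nullspace matrix transposed is again one, so column $k$ is zero iff row $k$ of the transpose is zero. The transpose's first row is the first column $v$, and $v$ satisfies the analogous relations.

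The first key step is to identify the minimal polynomial of $B$ acting on $w$. The standard fact from \cite{gold3} is that the characteristic polynomial of $B$ is $f_{n+1}(x+1)$. Moreover, $B$ is nonderogatory, since $B+I$ is the tridiagonal path adjacency matrix with a cyclic vector $e_1$. Hence $\ker f_{n+1}(B) = \ker d_{n+1}(B)$, where $d_{n+1} = \gcd(f_{n+1}(x), f_{n+1}(x+1))$. Vectors $w$ in this kernel correspond to divisors: the annihilator $g_w$ of $w$ divides $d_{n+1}$, and every divisor $g$ of $d_{n+1}$ occurs as the annihilator of some $w$. Row $k$ is zero iff $g_w \mid f_k$. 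So some $n\times n$ nullspace matrix has a zero row iff some irreducible factor $p$ of $d_{n+1}$ divides some $f_k$ with $k\le n$ (take $w$ with $g_w=p$). Conversely, if a matrix has a zero row, then $g_w \mid f_k$, so any irreducible factor of $g_w$ divides the smaller $f_k$.

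For columns I would use the symmetry of the problem. Since the matrix is square, the column recursion uses the same $B$ and the same $f_{n+1}$. The first column $v$ is therefore again annihilated by $d_{n+1}(B)$, so the same criterion applies to $v$. This gives the "if" direction: if every irreducible factor of $d_{n+1}$ divides no $f_k$ with $k\le n$, then $g_w$ and $g_v$ divide no $f_k$. Hence there is no zero row and no zero column, and every matrix is prime. The "only if" direction is the construction in the previous paragraph: a bad factor $p$ yields a matrix with a zero row.

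The hypothesis that $n$ is even, together with the clause "is a factor of $f_{n+1}(x)$" (automatic from $p \mid d_{n+1}$), will be used in a divisibility bookkeeping step. Over $\mathrm{GF}(2)$, $f_a \mid f_b$ iff $a\mid b$, and the least $k$ with $p\mid f_k$ (the rank of apparition) divides $n+1$. So "no smaller Fibonacci polynomial" is equivalent to "no proper divisor index," and the condition $k \le n$ versus $k<n+1$ matches. I expect the main obstacle to be justifying the cyclic-vector/annihilator correspondence cleanly, in particular that every divisor of $d_{n+1}$ is realized as $g_w$ for some $w$ in the kernel. I would try to prove this first via the module isomorphism $\mathrm{GF}(2)^n \cong \mathrm{GF}(2)[x]/(f_{n+1}(x+1))$ induced by $e_1$.
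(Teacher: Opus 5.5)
The paper has no proof of this theorem to compare against: it quotes the result from \cite{gold3} and uses it as a black box. Your argument is a correct, self-contained proof. Each step goes through:
\begin{itemize}
\item identifying $n\times n$ nullspace matrices with nonzero $w\in\ker f_{n+1}(B)$;
\item translating ``row $k$ is zero'' into $g_w\mid f_k$;
\item reducing columns to rows by transposition, where $v\neq 0$ because a zero first column would force the transpose, hence the matrix, to vanish;
\item realizing each irreducible $p\mid d_{n+1}$ as an annihilator, which gives the converse.
\end{itemize}

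\textbf{The step you flag as the main obstacle.} It is lighter than you expect, and cyclicity of $B$ is not needed. Cayley--Hamilton gives $g_w\mid m_B\mid\chi_B=f_{n+1}(x+1)$. Together with $g_w\mid f_{n+1}$, this yields $g_w\mid d_{n+1}$. For the converse you need only one nonzero $w$ with $p(B)w=0$. Such a $w$ exists because $p\mid\chi_B$ forces $p(B)$ to be singular, and then $g_w=p$ automatically. Your module-isomorphism route also works; it just proves more than is required.

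\textbf{Evenness and divisibility.} Contrary to your remark, neither the evenness of $n$ nor the strong-divisibility bookkeeping is used anywhere in your argument. Your criterion directly produces ``$p\mid f_k$ for some $1\le k\le n$,'' which is literally ``a factor of a smaller Fibonacci polynomial.'' So your proof establishes the equivalence for every $n$.

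The hypothesis that $n$ is even only excludes a degenerate case. For odd $n\ge 3$ one has $f_{n+1}=x\,f_{(n+1)/2}^2$ over $\mathrm{GF}(2)$. Hence every irreducible factor of $d_{n+1}$ already divides $f_2$ or $f_{(n+1)/2}$, and both sides of the equivalence are false. The composite $5\times 5$ matrices and the paper's composite $9\times 9$ example illustrate this.
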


It may be that further analysis along these lines may be able to resolve Conjecture~\ref{nullsp}.

A lemma is needed before the next result. The following lemma is from \cite{gold3}.

\begin{lemma}[\cite{gold3}]\label{lots}
Let $k$ be a positive integer. Then $f_{2^k+1}(x) \cdot f_{2^k-1}(x)$ is equal to the square of the product of all irreducible polynomials with degrees dividing $k$, except $x$.
\end{lemma}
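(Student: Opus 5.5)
The plan is to compute $f_{2^k+1}(x)\,f_{2^k-1}(x)$ in closed form using standard Fibonacci-polynomial identities over $\mathrm{GF}(2)$. The target is to show it equals $(x^{2^k-1}+1)^2$. The statement then follows from the classical fact that $x^{2^k}+x$ is the product of all irreducible polynomials over $\mathrm{GF}(2)$ whose degree divides $k$, each appearing exactly once. Dividing by the factor $x$ gives $x^{2^k-1}+1$, and squaring gives the claim.

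\textbf{Step 1: the identities.} Write $f_n$ for $f_n(x)$ and let $Q=\begin{pmatrix} x & 1\\ 1 & 0\end{pmatrix}$. By induction on $n$ from the recurrence (3),
\[
Q^n=\begin{pmatrix} f_{n+1} & f_n\\ f_n & f_{n-1}\end{pmatrix}.
\]
Three identities follow from this.
\begin{itemize}
\item Taking determinants gives Cassini's identity $f_{n+1}f_{n-1}+f_n^2=(\det Q)^n=1$; signs disappear mod $2$.
\item Comparing entries of $Q^{m+n}=Q^mQ^n$ gives the addition formula $f_{m+n}=f_{m+1}f_n+f_mf_{n-1}$.
\item Specializing the addition formula yields $f_{2n}=f_n(f_{n+1}+f_{n-1})=xf_n^2$ and $f_{2n+1}=f_{n+1}^2+f_n^2=(f_{n+1}+f_n)^2$, using that squaring is additive in characteristic $2$.
\end{itemize}
From $f_{2n}=xf_n^2$ and $f_1=1$, induction gives $f_{2^k}=x^{2^k-1}$.

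\textbf{Step 2: the computation.} Put $N=2^{k-1}$. The odd-index identity applied twice gives
\[
f_{2^k+1}f_{2^k-1}=\bigl[(f_{N+1}+f_N)(f_N+f_{N-1})\bigr]^2 .
\]
Expanding the bracket gives
\[
(f_{N+1}f_{N-1}+f_N^2)+f_N(f_{N+1}+f_{N-1}).
\]
By Cassini's identity the first group equals $1$. The second group equals $xf_N^2=f_{2N}=x^{2^k-1}$. Hence the bracket is $1+x^{2^k-1}$.

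\textbf{Step 3: the factorization.} The polynomial $x^{2^k}+x$ has derivative $1$, so it is squarefree. Its roots are exactly the elements of $\mathrm{GF}(2^k)$, so it is the product of all monic irreducible polynomials of degree dividing $k$, each once. The only such factor divisible by $x$ is $x$ itself, so $x^{2^k-1}+1=(x^{2^k}+x)/x$ is the product of the remaining ones, and its square is the lemma's claim.

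\textbf{Expected obstacle.} The only real risk is bookkeeping with indices in Step 1: making sure the doubling formulas are derived correctly mod $2$, and that the base case $k=1$ (where $N=1$ and $f_0=0$) is consistent. For $k=1$ the product is $f_3f_1=x^2+1=(x+1)^2$, which matches. The rest is routine.
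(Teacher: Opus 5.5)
Your proof is correct and complete. Every step checks out, including the edge case $k=1$: there Cassini's identity at $N=1$ reads $f_2f_0+f_1^2=1$. Direct computation also gives $f_5f_3=x^6+1$ and $f_9f_7=x^{14}+1$, as your closed form predicts.

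The paper gives no proof to compare against; it simply quotes the lemma from \cite{gold3}. Your derivation is therefore a self-contained replacement for that citation. It relies on the same identities the paper itself later borrows from \cite{gold3} in the proof of Theorem~\ref{super2}, namely $f_{2n+1}=f_n^2+f_{n+1}^2$ and $f_{2^k}=x^{2^k-1}$, plus Cassini's identity $f_{n+1}f_{n-1}+f_n^2=1$.

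Your route also yields the explicit closed form $f_{2^k+1}f_{2^k-1}=(x^{2^k-1}+1)^2$, which is sharper than the lemma as stated. With it, the factorization claim reduces cleanly to the standard fact that $x^{2^k}+x$ is the squarefree product of all irreducible polynomials over $\mathrm{GF}(2)$ whose degree divides $k$.
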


That is, $f_{2^n-1}(x)$ and $f_{2^n+1}(x)$ have, informally speaking, ``lots'' of factors. We say a polynomial $f(x)$ is \textit{self-conjugate} if $f(x)=f(x+1)$.

\begin{theorem}\label{super2}
The integer $2^{2n}-1$ is not super nullspace-prime, for all $n \ge 1$.
\end{theorem}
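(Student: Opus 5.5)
The plan is to show that for every $n \ge 1$ one of two things happens. Either there is no $(2^{2n}-2)\times(2^{2n}-2)$ nullspace matrix at all, in which case $2^{2n}-1$ is trivially not super nullspace-prime. Or there is a \emph{composite} one, built by tiling a smaller rectangular nullspace matrix exactly as in the $62\times 62$ example.

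Write $N = 2^{2n}-1 = (2^n-1)(2^n+1)$. For $n=1$ we have $N=3$. Here $\gcd(f_3(x+1),f_3(x)) = \gcd(x^2,(x+1)^2)=1$, so by the characterization theorem of \cite{gold3} there is no $2\times 2$ nullspace matrix, and $3$ is not super nullspace-prime. So assume $n\ge 2$. Then both $a+1:=2^n-1$ and $a+3=2^n+1$ are proper divisors of $N$ that are at least $3$.

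The core step is to find $s,t\in\{2^n-1,\,2^n+1\}$ with $\gcd(f_s(x+1), f_t(x))\neq 1$. By the existence theorem this gives an $(t-1)\times(s-1)$ nullspace matrix $A$.
\begin{itemize}
\item Fix an irreducible polynomial $p(x)$ of degree exactly $n$ over $\mathrm{GF}(2)$; one exists for every $n$. Since $n\ge 2$, $p\neq x$ and $p\neq x+1$.
\item The polynomial $p(x+1)$ is also irreducible of degree $n$, and it is not equal to $x$.
\item By Lemma~\ref{lots} with $k=n$, both $p(x)$ and $p(x+1)$ divide $f_{2^n+1}(x)\, f_{2^n-1}(x)$. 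Being irreducible, each divides one of the two factors.
\item Choose $t$ with $p(x)\mid f_t(x)$ and $s$ with $p(x+1)\mid f_s(x)$.
\item Substituting $x\mapsto x+1$ in the second divisibility gives $p(x)\mid f_s(x+1)$. Hence $p(x)$ divides $\gcd(f_s(x+1),f_t(x))$, which is therefore not $1$.
\end{itemize}
This step causes no real difficulty; its only subtlety is avoiding the excluded factor $x$, which is why I take $\deg p = n\ge 2$ rather than using $x+1$.

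It remains to build the composite matrix. Since $t\mid N$ and $s\mid N$, we can write $N = q_1 t = q_2 s$. Place copies of $A$ in a $q_1\times q_2$ array, with all-zero rows at row indices $t,2t,\dots,(q_1-1)t$ and all-zero columns at column indices $s,2s,\dots,(q_2-1)s$. Neighbouring blocks are reflected across each zero line, as in the construction following the Proposition of \cite{klost1} and in the $62\times62$ example. The result is an $(N-1)\times(N-1)$ matrix.

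The one point that must be checked is that the reflection construction works for rectangular blocks. Recurrence (1) is symmetric about a zero row: rows $k-1$ and $k+1$ agree. The same holds column-wise. Because the entries of $A$ satisfy the parity condition with zero boundary, each tiled entry adjacent to a zero line sees its mirror image across it, and each zero-line entry sees two equal neighbours. So every closed neighbourhood has even weight. Since $q_1\ge 3$, the matrix has a zero row, so it is a composite $(N-1)\times(N-1)$ nullspace matrix. Hence $2^{2n}-1$ is not super nullspace-prime.
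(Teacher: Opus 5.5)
Your proof is correct, but it takes a different route from the paper's. For $n>1$ the paper aims at one specific rectangular block. It reduces to showing $\gcd(f_{2^n-1}(x+1),f_{2^n+1}(x))\neq 1$, i.e.\ a $(2^n-2)\times 2^n$ nullspace matrix, and states this only for odd $n$. It then computes $f_{2^n\pm1}$ explicitly from $f_{2^n}=x^{2^n-1}$, and argues via Lemma~\ref{lots} and a parity-of-terms count that the two polynomials are not self-conjugate, so that some conjugate pair of irreducible factors is split between them.

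You instead fix an irreducible $p$ of degree $n$ and note that wherever $p$ and $p(x+1)$ fall among $f_{2^n-1},f_{2^n+1}$, some pair $s,t$ yields a nontrivial gcd. By allowing $s=t$ (a square block) you never need to know how conjugate pairs are distributed. You also need neither the explicit formula for $f_{2^n\pm1}$ nor any parity restriction on $n$.

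This makes your argument more robust. The paper's constant-term argument by itself only detects the factor $(x+1)^2$, whose conjugate $x$ divides no odd-index $f_m$, so it does not produce a common factor. Moreover, the rectangle alone cannot serve for all $n$. For $n=2$ one has $f_3=(x+1)^2$ and $f_5=(x^2+x+1)^2$, so no $2\times 4$ nullspace matrix exists, whereas your argument gives $s=t=5$ and tiles the $4\times 4$ matrix of Table~\ref{test2}. What the paper's route offers in exchange, when it works, is the shape of the block (e.g.\ the $6\times 8$ block behind $63$); yours only guarantees that some square or rectangular block exists, which is all the theorem needs.

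One wording slip: a block entry adjacent to a zero line does not ``see its mirror image'' there. It sees a $0$, which is why $A$'s zero-boundary parity carries over; the mirror symmetry is what takes care of the entries on the zero lines.
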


\begin{proof}
From Theorem 14 of \cite{gold3}, there exists a $(2^{k}-2) \times (2^{k}-2)$ nullspace matrix, for all $k > 3$. It is easy to see that 3 is not super nullspace-prime. So we assume $n > 1$.

Observe that $2^{2n}-1=(2^n-1)(2^n+1)$. So it suffices to prove that there exists a $(2^n-2) \times 2^n$ nullspace matrix, for all odd $n > 2$. This is the case if the $\gcd$ of $f_{2^n-1}(x+1)$ and $f_{2^n+1}(x)$ is not equal to 1, when $n$ is odd. So the theorem is true for any particular value of $n$ if some factor of $f_{2^n-1}(x)$ is also a factor of $f_{2^n+1}(x+1)$, or vice versa.

From \cite{gold3}, we know that $f_{2^n+1} + f_{2^n-1} = x^{2^n}$, since $f_{2^n}=x^{2^n-1}$, for all $n \ge 1$. By Lemma 2(d) of \cite{gold3} we know that $f_{2n+1}=f_{n}^{2}+f_{n+1}^{2}$, which allows us to determine the value of $f_{2^n-1}(x)$ precisely using a simple induction. Because $f_{2^n}(x)=x^{2^n-1}$, we can deduce that $f_{2^n+1}(x) = x^{2^n} + f_{2^n-1}(x) = x^{2^n} + x^{2^n-2} + \ldots + x^{2^{n-1}} + 1$.

We claim that neither $f_{2^n-1}(x)$ nor $f_{2^n+1}(x)$ are self-conjugate. By Lemma~\ref{lots}, it suffices to show that one of them is not self-conjugate. Note that one of $f_{2^n-1}(x)$, $f_{2^n+1}(x)$ has an even number of terms and one has an odd number of terms. Therefore when we substitute $(x+1)$ for $x$, one of them will have the coefficient of $x^0$ equal to 1 and one will have the coefficient of $x^0$ equal to 0. Hence it cannot be that both are self-conjugate.
\end{proof}

Generalizing the properties of the self-conjugate polynomials in the previous result, we believe the following to be true.

\begin{conj}
Let $p(x)$ be an even polynomial over $\mathrm{GF}(2)$. Then $p(x)$ is self-conjugate if and only if $p(x)=x^{2^{k_1}} + x^{2^{k_2}} + \ldots + x^{2^{k_m}}$, where $m$ is odd if $x^0$ is a term and $m$ is even otherwise.
\end{conj}

\begin{question}
Is the following true: Nullspace-prime $n$ is not super nullspace-prime if and only if $n=2^{2k}-1$ for some $k \ge 1$.
\end{question}

We might ask the more general question of whether there are infinitely many nullspace primes that are not super nullspace-primes.

We leave as an open problem whether or not there exists an efficient algorithm to determine if an integer is nullspace-prime or super nullspace-prime.

\subsection*{Acknowledgments}

We thank John Goldwasser for his valuable suggestions.

\section{Updates}

Conjecture 2 has been shown false, e.g., $p(x) = (x^4 + x^2)^3$. Question 3 has been answered in the negative. The first six integers that were found to be nullspace prime not super nullspace prime include 6, 63, 129, 171, 511, 2047. The first seven super nullspace primes are 5, 17, 31, 33, 127, 257, 683. The nulspace primes less than 1000 are 5, 6, 17, 31, 33, 63, 127, 129, 171, 257, 511, 683.

These were found by Haobo Ma and Wenlin Zhang.

\end{document}